\documentclass[12pt, a4paper]{article}

\usepackage{a4wide}
\usepackage{graphicx}
\usepackage[figurename=Fig.]{caption}
\usepackage{amsthm}
\usepackage{amsmath}

\theoremstyle{plain}
\newtheorem{theorem}{Theorem}
\newtheorem*{lemma}{Lemma}

\def \t {\triangle}
\def \e {\varepsilon}
\def \o {\omega}

\begin{document}

\title{The Mixed Poncelet--Steiner Closure Theorem}
\author{Nikolai Ivanov Beluhov}
\date{}

\maketitle

\begin{abstract}

We state and prove a new closure theorem closely related to the classical closure theorems of Poncelet and Steiner. Along the way, we establish a number of theorems concerning conic sections.
	
\end{abstract}

\section{A Sangaku Problem}

\begin{theorem}
\label{t1}

Given are two circles $k$ and $\o$ such that $\o$ lies inside $k$. Let $t$ be any line tangent to $\o$. Let $\o_1$ and $\o_2$ be the two circles which are tangent externally to $\o$, internally to $k$, are tangent to $t$, and lie on the same side of $t$ as $\o$. If the circles $\o$, $\o_1$, and $\o_2$ have a second common external tangent $u$ for some position of the line $t$, then they have a second common external tangent $u$ for any position of the line $t$. (Fig.\ \ref{f1})

\end{theorem}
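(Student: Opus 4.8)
The plan is to convert the existence of the second common external tangent into a collinearity of centers, and then to read off that collinearity from the polar equations of two conics that share the focus $O$.

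First I would record an elementary reformulation. Write $O$, $K$ for the centers and $\rho$, $R$ for the radii of $\omega$, $k$. Since $\omega$, $\omega_1$, $\omega_2$ all lie on the same side of $t$, the line $t$ is a common external tangent of the three circles. Three circles that already share an external tangent $t$ have a second common external tangent $u$ if and only if their three centers are collinear: if the centers lie on a line $\ell$, then the reflection of $t$ in $\ell$ is tangent to each circle and keeps all three on one side, so it is the desired $u$ (and $u\neq t$ unless $\ell\perp t$, a degenerate coincidence); conversely, two common external tangents are the sides of an angle in which all three circles are inscribed, so the centers lie on its bisector. Thus the statement to be proved is that the centers $O$, $P_1$, $P_2$ of $\omega$, $\omega_1$, $\omega_2$ are collinear for all $t$ as soon as they are collinear for one $t$.

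Next I would identify the loci of the centers. A circle tangent externally to $\omega$ and internally to $k$ has center $P$ with $|PO|+|PK|=R+\rho$, so $P$ runs over a fixed ellipse $\mathcal E$ with foci $O$ and $K$, its radius being $r=|PO|-\rho$. A circle of radius $r$ tangent to $\omega$ externally and to $t$ on the side of $\omega$ satisfies $|PO|=r+\rho$ and $\operatorname{dist}(P,t)=r$, whence $|PO|=\operatorname{dist}(P,d_t)$, where $d_t$ is the line parallel to $t$ at distance $\rho$ beyond $t$ from $\omega$; that is, $P$ lies on the parabola $\mathcal P_t$ with focus $O$ and directrix $d_t$. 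So $P_1$, $P_2$ are the intersection points of $\mathcal E$ and $\mathcal P_t$, two conics sharing the focus $O$. Now I would pass to polar coordinates about $O$. In the form $l/r=1+e\cos(\theta-\varphi)$ the ellipse reads $l_{\mathcal E}/r=1+e_{\mathcal E}\cos(\theta-\theta_0)$ with fixed $l_{\mathcal E},e_{\mathcal E},\theta_0$, while the parabola reads $2\rho/r=1+\cos(\theta-\psi)$, since its focus-to-directrix distance is $2\rho$ and its axis points from $O$ toward the point where $t$ touches $\omega$; the angle $\psi$ is the only quantity depending on $t$. Eliminating $r$ gives
\begin{equation}
(2\rho-l_{\mathcal E})+2\rho\,e_{\mathcal E}\cos(\theta-\theta_0)-l_{\mathcal E}\cos(\theta-\psi)=0,
\end{equation}
an equation of the shape $\alpha+\beta\cos\theta+\gamma\sin\theta=0$ whose two solutions are the directions $\theta_1,\theta_2$ of $P_1,P_2$ as seen from $O$. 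These directions differ by $\pi$ — equivalently, $O$, $P_1$, $P_2$ are collinear — exactly when the constant term $\alpha$ vanishes. But $\alpha=2\rho-l_{\mathcal E}$ does not depend on $t$. Hence the centers are collinear for every $t$ or for no $t$, which is the assertion (and the closure condition is the $t$-free relation $2\rho=l_{\mathcal E}$).

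The main obstacle I anticipate is not the final computation, which is immediate, but the supporting geometry: justifying the collinearity reformulation in precisely the ``external tangent'' form demanded, and confirming that $\mathcal E\cap\mathcal P_t$ consists of exactly the two admissible Sangaku circles on the correct side of $t$. For the latter I would invoke that two conics with a common focus meet in at most two real points (each direction from $O$ meets a focal conic once, and $\operatorname{dist}(P,d_t)=|PO|$ forces a single linear relation in $\cos\theta,\sin\theta$), which matches the two circles guaranteed by the hypothesis and pins $\theta_1,\theta_2$ as the genuine directions to $P_1,P_2$.
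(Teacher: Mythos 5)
Your proof is correct, but it takes a genuinely different route from the paper's own proof of Theorem \ref{t1} --- and, interestingly, it anticipates the machinery the paper only introduces later for Theorem \ref{t3}. The paper's argument is a self-contained computation: it characterizes the existence of a second common external tangent by the conditions that the contact points $A$, $B$ of $\omega_1$, $\omega_2$ with $\omega$ be diametrically opposite and that $r_1r_2=r^2$, and then uses the internal similitude center $S$ of $\omega$ and $k$, the power of $S$, and the invariant $SA^2+SB^2$ to show that $r_1r_2=r^2$ holds for one diameter $AB$ if and only if it holds for all. You instead reformulate the conclusion as collinearity of the three centers, observe that the centers of $\omega_1,\omega_2$ are the two intersection points of the fixed ellipse of centers of circles inscribed in the annulus with a parabola whose focus is the center of $\omega$ and whose directrix is the offset of $t$ --- exactly the confocal ellipse--parabola pair the paper constructs in its proof of Theorem \ref{t3} --- and then, rather than passing through Theorem \ref{t4} and Poncelet, you read off from the focal polar equations that the common chord passes through the common focus precisely when the $t$-independent constant term $2\rho-l_{\mathcal E}$ vanishes. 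This is arguably more conceptual than the paper's Theorem \ref{t1} computation, and it yields the paper's corollary $d^2=(R-r)^2-4r^2$ immediately (your condition $2\rho=l_{\mathcal E}$ is equivalent to it); what the paper's later sections add is the generalization to longer chains, which your focus-chord criterion alone does not give. Two small points to nail down in a final write-up: the ``degenerate coincidence'' $u=t$ in your collinearity lemma in fact cannot occur, since $\ell\perp t$ would force $\omega_1$ and $\omega_2$ to touch $t$ at the same point as $\omega$ and hence be internally tangent to it; and a root of the trigonometric equation is a genuine intersection point only when the corresponding polar radius on the parabola is positive, which is guaranteed here because the statement itself posits the two circles $\omega_1,\omega_2$.
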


\begin{figure}[ht]\begin{center}
\includegraphics[width=0.55\textwidth]{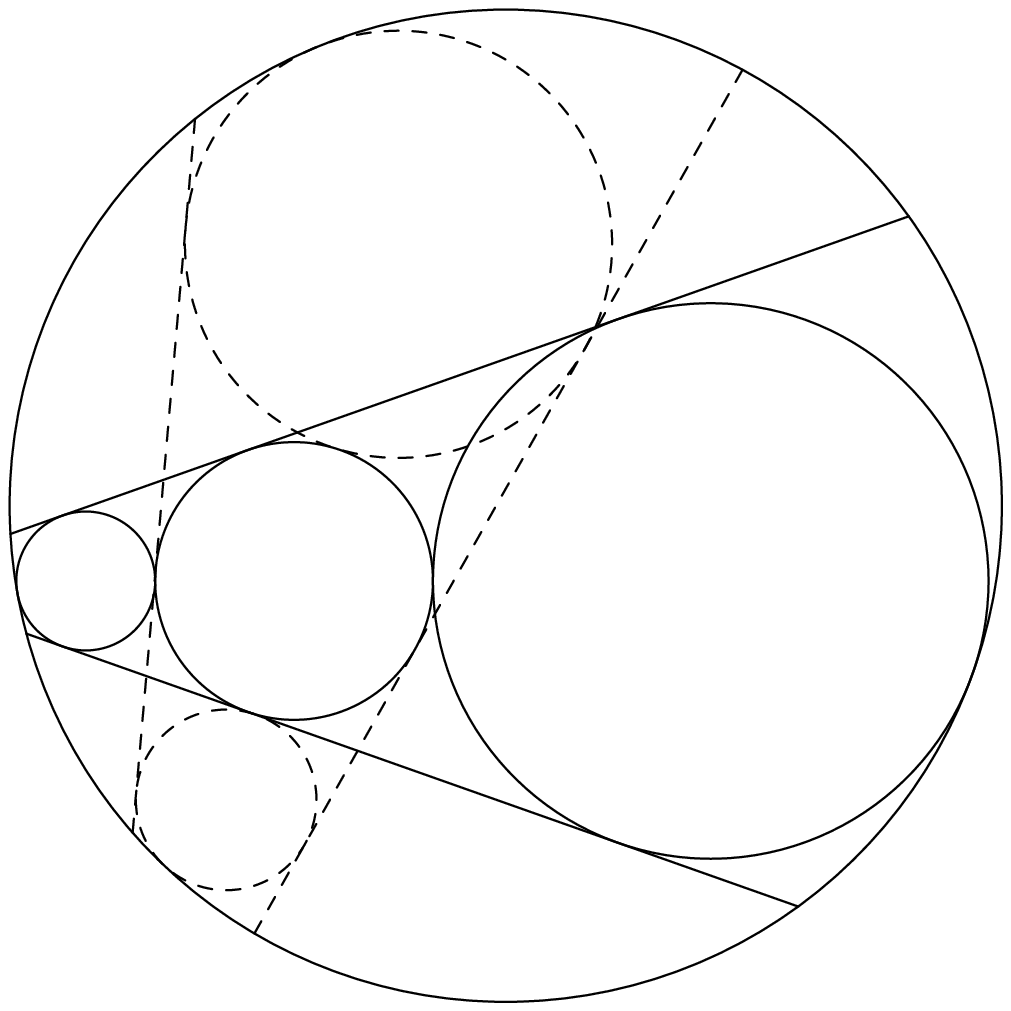}
\caption{}
\label{f1}
\end{center}\end{figure}

The author discovered this fact while contemplating an 1839 Sangaku problem from a tablet found in Nagano Prefecture \cite{fp89}. The original problem asks for a proof that the product of the radii of the largest circles inscribed in the circular segments which $t$ and $u$ cut from $k$ in Fig.\ \ref{f1} equals the square of the radius of $\omega$.

Theorem \ref{t1} admits a straightforward algebraic proof.

\begin{proof}[Proof of Theorem \ref{t1}]

Let $I$ and $r$ be the center and radius of $\o$ and $O$ and $R$ be the center and radius of $k$.

Let $\o_1$ and $\o_2$ be two circles inscribed in the annulus between $\o$ and $k$. Let $r_1$ and $r_2$ be their radii, $A$ and $B$ be their contact points with $\o$, and $C$ and $D$ be their contact points with $k$. Then the circles $\o$, $\o_1$, and $\o_2$ have two common external tangents exactly when $AB$ is a diameter of $\o$ and $r_1r_2 = r^2$.

Let $S$ be the internal similitude center of $\o$ and $k$; by the three similitude centers theorem, $S = AC \cap BD$. Let the lines $AS$ and $BS$ intersect $\o$ for the second time in $P$ and $Q$.

In this configuration, let $A$ and $B$ vary so that $AB$ is a diameter of $\o$. We wish to establish when $r_1r_2 = r^2$.

Let $s^2$ be the power of $S$ with respect to $\o$, $s^2 = SA \cdot SP = SB \cdot SQ$, and $m^2 = \frac{1}{2}(SA^2 + SB^2)$. Then both $s^2$ and $m^2$ remain constant as $AB$ varies: this is obvious for $s^2$, and for $m^2$ it follows from the fact that the side $AB$ and the median $SI$ to this side in $\t SAB$ remain constant.

Let $SA^2 = m^2 + x^2$ and $SB^2 = m^2 - x^2$. Notice that, when $AB$ varies, $x^2$ varies also.

We have $r : r_1 = AP : AC$, $r : r_2 = BQ : BD$, and $r : R = SP : SC = SQ : SD$. Therefore, \[ r_1 = \frac{s^2R - m^2r - x^2r}{s^2 + m^2 + x^2} \textrm{ and } r_2 = \frac{s^2R - m^2r + x^2r}{s^2 + m^2 - x^2}. \] It follows that \[ \begin{split} r_1r_2 = r^2 &\Leftrightarrow (s^2R - m^2r)^2 - x^4r^2 = r^2[(s^2 + m^2)^2 - x^4]\\ &\Leftrightarrow (s^2R - m^2r)^2 = r^2(s^2 + m^2)^2, \end{split} \] which does not depend on $x$. Therefore, if $\o$, $\o_1$, and $\o_2$ have two common external tangents for some position of $AB$, then they do so for any position of $AB$, as needed. \qedhere 

\end{proof}

As a corollary we obtain a simple characterization of the pairs of circles satisfying the condition of Theorem \ref{t1} which is strongly reminiscent of Euler's formula \cite{euf}. Namely, two circles $k$ and $\o$ of radii $R$ and $r$ and intercenter distance $d$ satisfy the condition of Theorem \ref{t1} exactly when \[ d^2 = (R - r)^2 - 4r^2. \]

An additional curious property of the figure is as follows.

\begin{theorem}
\label{t2}

In the configuration of Theorem \ref{t1}, the intersection $t \cap u$ of the common external tangents of $\o$, $\o_1$, and $\o_2$ describes a straight line when $t$ varies.

\end{theorem}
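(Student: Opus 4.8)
The plan is to reduce the statement to a one-parameter coordinate computation, exploiting the rigid description of the configuration that the second common tangent provides. I would set up coordinates with $I$ at the origin and $O=(d,0)$, and record the hypothesis of Theorem~\ref{t1} in the form $R^2-r^2-d^2=2r(r+R)$ (equivalently $d^2=(R-r)^2-4r^2$). Since $\o$, $\o_1$, $\o_2$ are all tangent to both $t$ and $u$, they are inscribed in the angle with apex $V=t\cap u$; hence they are pairwise homothetic with external center $V$, and their centers, together with $V$, lie on the bisector $\ell$ of that angle. But $\ell$ is the line through the tangency points $A$ and $B$, which is the diameter $AB$ of $\o$, so $\ell$ passes through $I$. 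I will parametrize the configuration by the direction $(\cos\alpha,\sin\alpha)$ of $\ell$ (taken to point from $V$ toward $I$) and by $s=\sin\theta$, where $2\theta$ is the angle at $V$.

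Next I would translate the tangency data into formulas. From ``inscribed in the angle and externally tangent to $\o$'' one gets $r_1=r\frac{1+s}{1-s}$ and $r_2=r\frac{1-s}{1+s}$ (so that $r_1r_2=r^2$ holds automatically), while $|IV|=r/s$ gives $V=-\frac{r}{s}(\cos\alpha,\sin\alpha)$. It then remains to bring in the fixed circle $k$: writing out that $\o_1$ and $\o_2$ are each internally tangent to $k$ yields two equations relating $s$ and $c=\cos\alpha$, which, after substituting $R^2-r^2-d^2=2r(r+R)$, take the form $r_1=r\frac{(r+R)+dc}{(r+R)-dc}$ and $r_2=r\frac{(r+R)-dc}{(r+R)+dc}$.

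The crux is that, under the hypothesis of Theorem~\ref{t1}, these two tangency conditions are \emph{not} independent. Comparing each with the geometric expressions for $r_1,r_2$, both collapse to the single relation $s=\frac{dc}{r+R}$; equivalently, eliminating $s$ forces $(R^2-r^2-d^2)^2=4r^2(r+R)^2$, with the $d^2c^2$ terms cancelling exactly as in the proof of Theorem~\ref{t1}. This dependence is precisely what makes the locus one-dimensional, and it is the step where the hypothesis does the real work. I expect the main care to be needed in keeping the branches straight — which similitude center is $V$, internal versus external tangency of the $\o_i$ with $k$, and which of $\o_1,\o_2$ is the nearer to $V$ — since a wrong sign turns the clean relation into an unusable one.

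Finally I would substitute. With $r/s=r(r+R)/(dc)$, the point $V=-\frac{r}{s}(\cos\alpha,\sin\alpha)$ becomes $V=\left(-\frac{r(r+R)}{d},\,-\frac{r(r+R)}{d}\tan\alpha\right)$. Its first coordinate is independent of $\alpha$, so as $t$ varies the point $t\cap u$ runs along the fixed line $x=-\frac{r(r+R)}{d}$, perpendicular to the line of centers $OI$ and lying on the opposite side of $I$ from $O$; this is the desired straight line (the degenerate case $c=0$, where $t\parallel u$, places $V$ at infinity on this line, consistently). As a corroborating check one may note that, for each $t$, the point $V$ together with the tangency points $C,D$ of $\o_1,\o_2$ with $k$ and the fixed external similitude center $E$ of $\o$ and $k$ are collinear by Monge's three-similitude-centers theorem; this rotating line through $E$ meets the fixed vertical line exactly at the computed $V$.
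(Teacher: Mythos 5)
Your proposal is correct, and I verified the endpoint against the paper's own computation: your line $x=-\tfrac{r(r+R)}{d}$ (with $I$ at the origin and $O$ at $(d,0)$) sits at distance $\tfrac{r(r+R)}{d}$ from $I$ on the side away from $O$, which agrees exactly with the $y$-axis of the paper's coordinates, where $I$ is at $\tfrac{a(a+1)}{2}=\tfrac{r(r+R)}{d}$. The route, however, is genuinely different. The paper never touches the angle at $V$ or the pair $\omega_1,\omega_2$ at all: it chooses coordinates in which the four collinear points $A,B,C,D$ on the line of centers form a geometric progression $(1,a,a^2,a^3)$ --- a normalization available precisely because of the Theorem~\ref{t1} condition $BC^2=AB\cdot CD$ --- and then shows by subtracting the two tangency equations that an \emph{arbitrary} circle $\omega'$ inscribed in the annulus satisfies $r'=\tfrac{a-1}{a+1}\,x_{J}$. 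Hence the radius of every inscribed circle is proportional to its center's distance from one fixed line $l$, so the external similitude center of $\omega$ and $\omega'$ lies on $l$ for every $\omega'$, and in particular $t\cap u$ does. Your argument instead parametrizes the full three-circle configuration by the direction of the common center line and the half-angle at $V$, imposes both internal tangencies with $k$, and observes that under the hypothesis the two conditions degenerate to the single relation $s=\tfrac{dc}{r+R}$, from which $V$ is computed explicitly. What the paper's version buys is brevity and a stronger intermediate fact (the similitude-center statement for every inscribed circle, with no trigonometry and no branch bookkeeping); what yours buys is an explicit equation of the locus in terms of $r$, $R$, $d$, and a transparent view of where the Theorem~\ref{t1} hypothesis does its work, namely in making the two tangency conditions dependent. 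Your sign and branch caveats are the right things to worry about, but they do resolve consistently, and the degenerate case $c=0$ is handled correctly as the point at infinity of the line.
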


\begin{proof}

When $I \equiv O$, the line in question is the line at infinity. Let $I \not\equiv O$ and let the line $IO$ intersect $k$ in $A$ and $D$ and $\o$ in $B$ and $C$ so that $B$ and $C$ lie on the segment $AD$ in this order and $AB < CD$. Consider a coordinate system in which the coordinates of $A$, $B$, $C$, and $D$ are $(1, 0)$, $(a, 0)$, $(a^2, 0)$, and $(a^3, 0)$, respectively, for some real $a > 1$.

Consider an arbitrary circle $\o'$ of center $J$ of coordinates $(x, y)$ and radius $r'$ inscribed in the annulus between $\o$ and $k$. We have \[ IJ = r + r' \Rightarrow \left (\frac{a^2 + a}{2} - x \right )^2 + y^2 = \left (\frac{a^2 - a}{2} + r' \right )^2 \] and \[ OJ = R - r' \Rightarrow \left (\frac{a^3 + 1}{2} - x \right )^2 + y^2 = \left (\frac{a^3 - 1}{2} - r' \right )^2. \]

From these, we obtain \[ \left (\frac{a^2 + a}{2} - x \right )^2 - \left (\frac{a^3 + 1}{2} - x \right )^2 = \left (\frac{a^2 - a}{2} + r' \right )^2 - \left (\frac{a^3 - 1}{2} - r' \right )^2 \Rightarrow r' = \frac{a - 1}{a + 1} \cdot x. \]

Therefore, the ratio $r : r'$ equals the ratio of the distances from $I$ and $J$ to the $y$-axis $l$, implying that the common external tangents of $\o$ and $\o'$ intersect on $l$ as $\o'$ varies, as needed. \qedhere

\end{proof}

\section{The Mixed Poncelet--Steiner Closure Theorem}

Theorem \ref{t1} admits a powerful generalization.

\begin{theorem}[The Mixed Poncelet--Steiner Closure Theorem]
\label{t3}

Given are two circles $k$ and $\o$ such that $\o$ lies inside $k$. Let $\o_1$ be any circle inscribed in the annulus between $k$ and $\o$. Let $t_1$ be a common external tangent of $\o_1$ and $\o$. Let $\o_2$ be the second circle which is inscribed in the annulus between $k$ and $\o$, is tangent to $t_1$, and lies on the same side of $t_1$ as $\o$. Let $t_2$ be the second common external tangent of $\o_2$ and $\o$. Let $\o_3$ be the second circle which is inscribed in the annulus between $k$ and $\o$, is tangent to $t_2$, and lies on the same side of $t_2$ as $\o$, etc. If $\o_{n + 1} \equiv \o_1$ for some choice of the initial circle $\o_1$ and the initial tangent $t_1$, then $\o_{n + 1} \equiv \o_1$ for any choice of the initial circle $\o_1$ and the initial tangent $t_1$. (Fig.\ \ref{f2})

\end{theorem}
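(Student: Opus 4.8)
The plan is to reduce Theorem \ref{t3} to the classical Poncelet closure theorem by extracting, from the inscribed circles and their tangents, a genuine Poncelet polygon between two fixed conics. The key preliminary observation is that the consecutive tangents $t_i$ and $t_{i+1}$ are, by construction, precisely the two common external tangents of the single circle $\o_{i+1}$ and $\o$: indeed $t_i$ is a common external tangent of $\o_{i+1}$ and $\o$ because $\o_{i+1}$ is tangent to $t_i$ on the same side as $\o$, while $t_{i+1}$ is by definition the second such tangent. Hence their intersection $V_{i+1} := t_i \cap t_{i+1}$ is the external similitude center of $\o_{i+1}$ and $\o$. So the lines $t_i$ are the successive sides of a polygon $V_1 V_2 \cdots$, every side tangent to the fixed conic $\o$, and from each vertex $V_{i+1}$ the two tangent lines to $\o$ are exactly the incoming $t_i$ and the outgoing $t_{i+1}$ --- which is the defining local picture of a Poncelet traverse.

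The crux is then to show that all the vertices $V_i$ lie on a single conic depending only on $k$ and $\o$. I would prove that the locus $\mathcal H$ of external similitude centers of $\o$ with the inscribed circles of the annulus is a conic. The centers $J$ of the inscribed circles satisfy $IJ + OJ = (r + r') + (R - r') = R + r$, so they run over an ellipse with foci $I$ and $O$. Writing $\rho = IJ = r + r'$, one computes that the similitude center $H$ lies on ray $IJ$ with $IH = r\rho/(2r - \rho)$, i.e. $H$ is the image of $J$ under the radial transformation $\rho \mapsto r\rho/(2r - \rho)$ centered at $I$. Passing to polar coordinates about the focus $I$, in which the ellipse reads $1/\rho = (1 + e\cos\theta)/\ell$, a short computation gives \[ \frac{1}{IH} = \frac{2r - \rho}{r\rho} = \frac{2}{\rho} - \frac{1}{r} = \left( \frac{2}{\ell} - \frac{1}{r} \right) + \frac{2e}{\ell}\cos\theta, \] which is again the focal equation of a conic. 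Thus $\mathcal H$ is a conic, and every $V_i$, being a similitude center of some $\o_i$ with $\o$, lies on it.

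With both conics in hand the configuration is a bona fide Poncelet polygon: $V_1 V_2 \cdots$ is inscribed in $\mathcal H$ and circumscribed about $\o$. Since the correspondence $\o' \mapsto H$ between an inscribed circle and its similitude center with $\o$ is generically one-to-one ($H$ recovers the ray $IJ$, and each ray carries a unique inscribed circle), the circle process closes, $\o_{n+1} \equiv \o_1$, exactly when the polygon closes, $V_{n+1} = V_1$. The classical Poncelet closure theorem then asserts that closure for one starting position forces closure for every starting position, which is precisely the porism of Theorem \ref{t3}; invoking it as a black box also sidesteps the delicate analysis of which torsion condition governs the elliptic structure, since that analysis is exactly what Poncelet's theorem already encapsulates.

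The main obstacle I anticipate is the lemma that $\mathcal H$ is a conic together with the bookkeeping needed to certify a genuine Poncelet configuration: one must verify that the alternating ``second circle / second tangent'' rule consistently corresponds to a single direction of traverse around the polygon, that the similitude-center map is injective on the relevant arc, and that $\mathcal H$ and the polygon remain nondegenerate and real (note that the focal equation above degenerates to a line when $\ell = 2r$). Handling the exceptional positions --- where a tangent becomes the line at infinity when $I \equiv O$, or where two consecutive circles coincide --- is where the argument will demand the most care; it is reassuring that the $n = 2$ instance of this entire scheme is exactly Theorem \ref{t1}.
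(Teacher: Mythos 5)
Your proposal is correct in its essentials, but it reduces Theorem \ref{t3} to Poncelet's theorem along a genuinely different path than the paper does. The paper works with the polygon of \emph{centers}: the centers $O_i$ of the circles $\o_i$ lie on the ellipse $\e$ with foci $O$ and $I$, each pair of consecutive centers lies on a parabola with focus $I$ whose directrix is a homothetic image of the tangent $t_i$, and a separate result (Theorem \ref{t4}, proved by a three-dimensional argument with cones tangent to a sphere at the common focus) shows that the common chords $O_iO_{i+1}$ of the fixed ellipse and the rotating parabola envelop a fixed conic $\gamma$; Poncelet is then applied to the pair $(\e, \gamma)$. You instead work with the polygon of \emph{tangent lines}: the sides are the $t_i$, all tangent to the fixed circle $\o$, and the vertices are the external similitude centers $V_{i+1}$ of $\o_{i+1}$ with $\o$, which your polar-coordinate computation places on a fixed conic $\mathcal{H}$ with focus $I$; Poncelet is applied to $(\mathcal{H}, \o)$. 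Your route is more elementary and self-contained---it replaces the Dandelin-sphere machinery of Theorem \ref{t4} by a short focal-equation calculation, and as a bonus it exhibits $I$ as a focus of the second conic, which is essentially the content the paper extracts separately as Theorem \ref{t5}. What it costs is the bookkeeping you already flag: one must check that the similitude-center map $\o' \mapsto H$ is injective (your ``each ray carries a unique inscribed circle'' argument needs a word when $r' > r$, since then $H$ lies on the ray opposite to $IJ$; a continuity argument disposes of the residual ambiguity), and one must handle the degeneration of $\mathcal{H}$ to a line when $\ell = 2r$. The paper's polygon of centers is in evident bijection with the circles, so it avoids the injectivity issue, at the price of needing Theorem \ref{t4}; both arguments share the same genericity caveats inherent in invoking Poncelet's porism as a black box.
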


\begin{figure}[ht]\begin{center}
\includegraphics[width=0.55\textwidth]{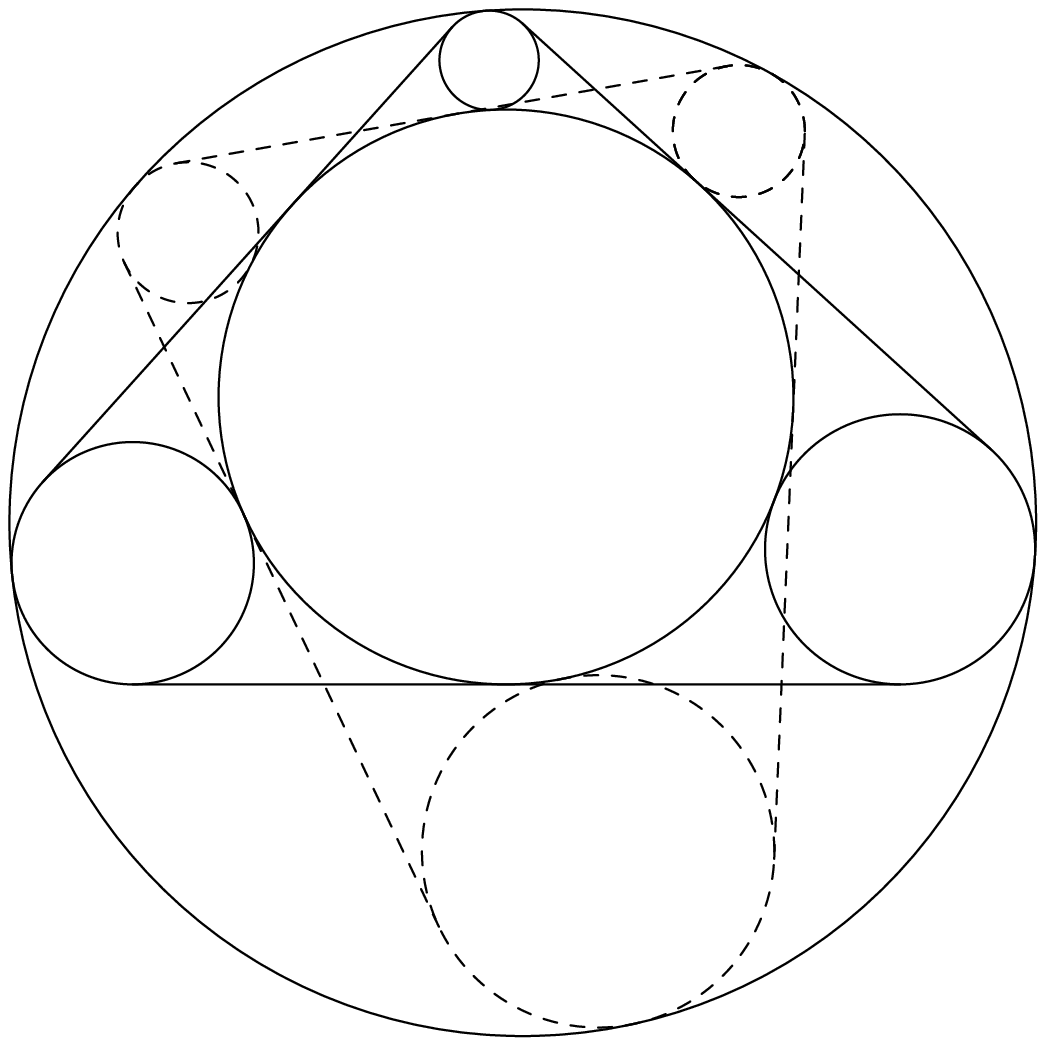}
\caption{}
\label{f2}
\end{center}\end{figure}

This theorem is strongly reminiscent of the classical closure theorems of Poncelet and Steiner (\cite{pon}, \cite{stein}). Poncelet's theorem deals solely with chains of tangents, and Steiner's theorem deals solely with chains of circles. Hence the name suggested by the author.

We proceed to show how Theorem \ref{t3} can be reduced to Poncelet's theorem.

\begin{theorem}
\label{t4}

Given are an ellipse $\e$ and a parabola $p$ of a common focus $F$. Then, when $p$ rotates about $F$, the common chord of $\e$ and $p$ (for all positions of $p$ for which it exists) is tangent to some fixed conic $\gamma$.

\end{theorem}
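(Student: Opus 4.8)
The plan is to exploit the common focus $F$ by working in polar coordinates centered at $F$. Both the ellipse and the parabola, sharing $F$ as a focus, have focal equations of the form $\frac{\ell}{\rho} = 1 + e\cos(\theta - \phi)$, where $\ell$ is the semi-latus rectum, $e$ is the eccentricity, and $\phi$ fixes the orientation of the major axis. For the ellipse $\e$ I would write $\frac{\ell_0}{\rho} = 1 + e_0\cos\theta$ with fixed parameters (choosing the polar axis along its major axis), and for the parabola $p$ I would write $\frac{L}{\rho} = 1 + \cos(\theta - \alpha)$, where $L$ is a fixed constant and $\alpha$ is the rotation parameter that varies. The key observation is that subtracting the two focal equations eliminates $\rho$ in a controlled way: the common chord of $\e$ and $p$ consists of points satisfying \emph{both} equations, so on the chord the difference of the right-hand sides, divided by $\rho$, vanishes.

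The central step is to find the equation of the common chord directly. Since both $\frac{1}{\rho}$ expressions equal a linear combination of $\cos\theta$, $\sin\theta$, and the constant $1$, and since $\frac{1}{\rho}$, $\frac{\cos\theta}{\rho} = \frac{x}{\rho^2}$, $\frac{\sin\theta}{\rho} = \frac{y}{\rho^2}$ are each \emph{linear} in the Cartesian coordinates $x, y$ after clearing the common factor $\rho^2 = x^2 + y^2$, each conic's focal equation is genuinely linear in $(x,y)$ once we multiply through by $\rho$. Concretely, $\frac{\ell_0}{\rho} = 1 + e_0 \frac{x}{\rho}$ becomes $\ell_0 = \rho + e_0 x$, i.e. $\rho = \ell_0 - e_0 x$; similarly $\rho = L - (x\cos\alpha + y\sin\alpha)$ for the parabola. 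Setting these two expressions for $\rho$ equal gives the equation of the common chord as a genuine \emph{line}:
\begin{equation}
\ell_0 - e_0 x = L - x\cos\alpha - y\sin\alpha,
\end{equation}
that is, $(\cos\alpha - e_0)x + (\sin\alpha)y = L - \ell_0$. This is the radical-axis-type cancellation that makes the problem tractable: the chord is linear in $(x,y)$ with coefficients depending on the single parameter $\alpha$.

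It then remains to show that this one-parameter family of lines envelopes a conic. Writing the line as $A(\alpha)x + B(\alpha)y + C = 0$ with $A = \cos\alpha - e_0$, $B = \sin\alpha$, and $C = \ell_0 - L$ (constant), I would compute the envelope in the standard way by solving the line equation together with its $\alpha$-derivative $(-\sin\alpha)x + (\cos\alpha)y = 0$. The derivative condition gives $\tan\alpha = y/x$, from which $\cos\alpha = x/\sqrt{x^2+y^2}$ and $\sin\alpha = y/\sqrt{x^2+y^2}$ (up to sign); substituting back into the line equation yields $\sqrt{x^2+y^2} - e_0 x + C = 0$, i.e. $\sqrt{x^2+y^2} = e_0 x - C$. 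Squaring produces $(1 - e_0^2)x^2 + y^2 + 2e_0 C x - C^2 = 0$, which is manifestly a conic $\gamma$ (an ellipse when $e_0 < 1$, as here), and — pleasingly — one that is itself a focal conic with respect to $F$, since the intermediate equation $\sqrt{x^2+y^2} = e_0 x - C$ is precisely a focus–directrix relation.

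The main obstacle I anticipate is \emph{not} the algebra, which collapses neatly, but two points of rigor. First, one must handle the range of validity: the common chord exists only for those $\alpha$ where $\e$ and $p$ actually intersect in two real points, and one should check that the envelope computation remains meaningful (and that the sign choices in extracting $\cos\alpha, \sin\alpha$ are consistent) across that range. Second, and more delicate, is the degenerate or boundary behavior — positions of $p$ where the ``common chord'' becomes tangential contact or passes through $F$ — together with confirming that the squaring step introduces no spurious branch outside the geometrically realized arc of $\gamma$. Verifying that $\gamma$ is traced out genuinely as an envelope (each line tangent, not merely incident) rather than an extraneous locus is the step I would treat most carefully.
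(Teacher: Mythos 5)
Your proof is correct, and it takes a genuinely different route from the paper's. The paper works in three dimensions: it erects a sphere tangent to the plane at $F$, realizes $\e$ and $p$ as sections of tangent cones to that sphere, shows that the common chord lies in a plane $\pi$ passing through the fixed line $\pi_E \cap \pi_P$ and a fixed point $C$ on the perpendicular to the plane at $F$, and obtains $\gamma$ by projecting through $C$ the conic in which $\pi_E$ cuts the cone enveloped by the rotating planes $\pi_P$. Your argument replaces all of this with the focal polar equation: subtracting $\rho = \ell_0 - e_0 x$ from $\rho = L - x\cos\alpha - y\sin\alpha$ kills everything nonlinear and exhibits the chord's supporting line as $(\cos\alpha - e_0)x + (\sin\alpha)y + (\ell_0 - L) = 0$ (legitimately, since both focus--directrix relations hold with the correct sign at every real common point, and the coefficients cannot all vanish because $e_0 < 1$). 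This is far more elementary, and as a bonus your intermediate relation $\sqrt{x^2+y^2} = e_0 x - C$ delivers the paper's Theorem 5 ($F$ is a focus of $\gamma$) for free, together with the sharper fact that $\gamma$ has the \emph{same eccentricity} as $\e$ --- facts the paper must extract from a separate lemma about projections of cone sections. Regarding the point of rigor you flag at the end: you can sidestep the envelope/discriminant worries entirely by passing to the dual plane, where your family of lines has coordinates $[\cos\alpha - e_0 : \sin\alpha : \ell_0 - L]$ and visibly traces the conic $(u + e_0)^2 + v^2 = 1$ in the chart $w = \ell_0 - L$; a conic in the dual plane is precisely the tangent-line family of a conic in the original plane, so every chord is genuinely tangent to $\gamma$ with no squaring or sign analysis needed. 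The one degenerate case to acknowledge is $\ell_0 = L$, where every chord passes through $F$ and $\gamma$ collapses to the point $F$; this is a boundary case of the statement itself and afflicts the paper's projective argument equally.
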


\begin{proof}

Let $\alpha$ be the plane in which $\e$ and $p$ lie. Let $s$ be a sphere which is tangent to $\alpha$ at $F$. For any point $X$ in space outside $s$, let $t_X$ be the length of the tangent from $X$ to $s$, $c_X$ be the cone formed by the lines through $X$ which are tangent to $s$, $\pi_X$ be the plane which contains the contact circle of $c_X$ with $s$, and $l_X$ be the conic section in which $c_X$ intersects $\alpha$.

Let $E$ and $P$ be such that $l_E = \e$ and $l_P = p$. For all $s$ whose radius is small enough, $E$ and $P$ will both be finite points and the common chord of $\e$ and $p$ will lie in the opposite half-space with respect to $\pi_E$ and $\pi_P$ as $E$ and $P$, respectively (since the circle $c_X \cap s$ will separate $X$ and $l_X$ on the surface of $c_X$ for $X = E$ and $X = P$).

Let $d(X, \chi)$ denote the signed distance from the point $X$ to the plane $\chi$. Then, for all points $X$ in $\e$, $d(X, \pi_E) : t_X$ is some real constant $c_1$ and for all points $X$ in $p$, $d(X, \pi_P) : t_X$ is some, possibly different, real constant $c_2$. It follows, then, that the common chord of $\e$ and $p$ lies in the plane $\pi$ which is the locus of all points $X$ such that $d(X, \pi_E) : d(X, \pi_P) = c_1 : c_2$. Notice that $\pi$ also contains the straight line $m = \pi_E \cap \pi_P$, and that, when $p$ varies, $P$ and $\pi$ vary also, but $c_1$, $c_2$, and $c_1 : c_2$ remain fixed.

Let $l$ be the line through $F$ which is perpendicular to $\alpha$, and let $A = \pi_E \cap l$, $B = \pi_P \cap l$, and $C = \pi \cap l$. Then $A$ and $B$ remain fixed when $p$ and $P$ vary (since $p$ varies by rotating about $F$), $\pi_E$ and $\pi_P$ intersect $l$ at fixed angles, and the ratio $d(C, \pi_E) : d(C, \pi_P) = c_1 : c_2$ remains constant. It follows that the ratio of the signed distances $|CA| : |CB|$ remains constant also and that $C$ remains constant, too.

When $p$, $P$, and $\pi_P$ vary, the planes $\pi_P$ envelop some straight circular cone $d$. The intersection $d \cap \pi_E$ is then some fixed conic section $\beta$ tangent to the line $m$. Let $\gamma$ be the projection of $\beta$ through the fixed point $C$ onto $\alpha$. As we saw above, the projection of $m$ through $C$ onto $\alpha$ is a straight line which contains the common chord of $\e$ and $p$. Since projection preserves tangency, this common chord will always be tangent to the fixed conic $\gamma$, as needed. \qedhere

\end{proof}

\begin{proof}[Proof of Theorem \ref{t3}]

Let $\o'$ of center $O'$ and radius $r'$ be any circle inscribed in the annulus between $k$ and $\o$ of centers $O$ and $I$ and radii $R$ and $r$. Then $OO' + O'I = R - r' + r' + r = R + r$ remains constant when $\o'$ varies, implying that $O'$ traces some fixed ellipse $\e$ of foci $O$ and $I$.

Let $t$ be any tangent to $\o$ and let $\o'$ and $\o''$ of centers $O'$ and $O''$ and radii $r'$ and $r''$ be the two circles inscribed in the annulus between $k$ and $\o$, tangent to $t$, and lying on the same side of $t$ as $\o$. Let $t'$ be the image of $t$ under homothety of center $I$ and coefficient $2$, and let $d(X, l)$ be the signed distance from the point $X$ to the line $l$. Then $O'I = r' + r = d(O', t')$ and $O''I = r'' + r = d(O'', t')$, implying that $O'$ and $O''$ both lie on the parabola $p$ of focus $I$ and directrix $t'$.

When $t$ varies, $t'$, and, consequently, $p$, rotates about $I$. Since $O'O''$ is the common chord of $\e$ and $p$, by Theorem \ref{t4} $O'O''$ remains tangent to some fixed conic $\gamma$.

We are only left to apply Poncelet's theorem to the two conics $\e$ and $\gamma$ and the chain $O_1$, $O_2$, ... $O_{n + 1}$ of the centers of the circles $\o_1$, $\o_2$, ... $\o_{n + 1}$. \qedhere

\end{proof}

\section{More on Conic Sections}

Theorem \ref{t4} remains true, its proof unaltered, when $p$ is taken to be an ellipse rather than a parabola. It also remains true when $p$ is an arbitrary conic section, provided that the common chord of $\e$ and $p$ is chosen ``just right'' and varies continuously when $p$ varies.

\begin{theorem}
\label{t5}

In the configuration of the generalized Theorem \ref{t4}, $F$ is a focus of $\gamma$.

\end{theorem}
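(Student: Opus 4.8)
The plan is to set up the same three-dimensional picture as in the proof of Theorem \ref{t4} and then read off the focus--directrix property of $\gamma$ directly from the equation of the central projection. First I would place $\alpha$ as the horizontal plane $z = 0$, put $F$ at the origin, and take the normal $l$ to $\alpha$ at $F$ to be the $z$-axis. The first thing to pin down is that, because $p$ varies by rotating about $F$, the whole spatial configuration is symmetric under rotation about $l$: the apex $P$ traces a horizontal circle centred on $l$, the planes $\pi_P$ envelop a cone of revolution $d$ whose axis is exactly $l$, and the point $C$ lies on $l$. All of this is already present in the proof of Theorem \ref{t4}, where $d$ is called a straight circular cone and $C$ is located on $l$; the only genuinely new verification is that the axis of $d$ is $l$ itself, which follows from the rotational symmetry (the rotation of $p$ about $F$ extends to a rotation of all of space about $l$ that fixes $s$, $F$, $E$, and $\pi_E$).

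Granting this, write the apex of $d$ as $(0,0,a)$, so that $d$ has equation $x^2 + y^2 = k^2(z - a)^2$, and write $C = (0,0,c)$. For a point $X = (x_1, x_2, 0)$ of $\gamma \subset \alpha$, its preimage $X'$ on $\beta = d \cap \pi_E$ lies on the line $CX$, so $X' = (\lambda x_1,\, \lambda x_2,\, c(1 - \lambda))$ for a scalar $\lambda$. The condition $X' \in \pi_E$ is linear, hence determines $\lambda = N / L(X)$ with $N$ a constant and $L$ an affine form (both fixed, since $\pi_E$ is fixed). The condition $X' \in d$ then reads $\lambda^2(x_1^2 + x_2^2) = k^2\bigl(c - a - c\lambda\bigr)^2$. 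Substituting $\lambda = N/L(X)$ and clearing $L(X)^2$ gives \[ N^2(x_1^2 + x_2^2) = k^2\bigl((c - a) L(X) - c N\bigr)^2. \] The right-hand side is the square of an affine form $M(X)$, and $x_1^2 + x_2^2 = XF^2$ precisely because $F$ is the origin. Hence every point of $\gamma$ satisfies $|N| \cdot XF = |k| \cdot |M(X)|$, which is exactly the focus--directrix equation of a conic with focus $F$ and directrix the line $\{M = 0\}$; therefore $F$ is a focus of $\gamma$.

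The mechanism making this succeed is the conjunction of two features of the construction: that $XF$ appears as the horizontal distance $\sqrt{x_1^2 + x_2^2}$ exactly because $F$ sits at the foot of the axis $l$ on $\alpha$, and that this same horizontal distance is what the revolution cone $d$ records along its generators. The hard part will therefore be the first step, namely verifying in the generalized setting (where $p$ is an arbitrary conic rotating about $F$) that $d$ is truly a surface of revolution about $l$ and that $C \in l$; once that is in hand, the elimination of $\lambda$ is routine. I would also remark that the computation identifies the directrix of $\gamma$ as $\{M = 0\}$ and its eccentricity as $|k|/|N|$ up to the normalization of $M$, and that the nondegenerate cases ($c \neq a$, so $C$ is not the apex of $d$, and $\pi_E$ not horizontal) are exactly those in which $\gamma$ is a proper conic rather than a circle centred at $F$.
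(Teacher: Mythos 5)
Your proof is correct, and at the structural level it follows the paper's route: both arguments reduce Theorem \ref{t5} to the single fact that projecting the conic $\beta = d \cap \pi_E$ from a point $C$ on the axis $l$ of the revolution cone $d$ onto the plane $\alpha$ perpendicular to $l$ yields a conic with a focus at $l \cap \alpha = F$. The essential geometric inputs are, as you say, that $d$ really is a cone of revolution about $l$ and that $C \in l$; the latter holds by the very definition $C = \pi \cap l$ in the proof of Theorem \ref{t4}, and the former by the rotational symmetry you invoke --- though note that the rotation about $l$ does \emph{not} fix $E$ or $\pi_E$ as your parenthetical claims; all that is needed (and all that is true) is that it fixes the sphere $s$ and permutes the positions of $p$, hence permutes the planes $\pi_P$ among themselves, so their envelope is a surface of revolution about $l$. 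Where you genuinely differ is in how the key fact is established: the paper isolates it as a standalone synthetic lemma (for an arbitrary cutting plane and an arbitrary projection point on the axis) and proves it via Menelaus' theorem together with the characterization of a conic as the locus where the distance to $O$ is an affine function of the signed distance to a line, whereas you eliminate the projection parameter $\lambda$ in coordinates and read off the focus--directrix relation $|N|\cdot XF = |k|\cdot|M(X)|$ directly. The two treatments cover the same generality; yours is shorter and hands you the directrix $\{M=0\}$ and the eccentricity of $\gamma$ explicitly, while the paper's stays within the synthetic framework of the rest of the argument and avoids coordinates altogether.
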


\begin{lemma}

Let $c$ be a straight circular cone of axis $l$ and base the circle $k$ of center $O$ which lies in the plane $\alpha$. Let the plane $\beta$ intersect $c$ in the conic section $a$, $P$ be an arbitrary point on $l$, and $b$ be the projection of $a$ through $P$ onto $\alpha$. Then $O$ is a focus of $b$.

\end{lemma}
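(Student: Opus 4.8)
The plan is to establish the \emph{focus--directrix property} of $b$ with focus $O$ directly: I will exhibit a fixed line $g \subset \alpha$ and a constant $e$ for which $|OQ'| = e \cdot d(Q', g)$ holds for every point $Q'$ of $b$, where $d(\cdot, \cdot)$ denotes distance within $\alpha$. Since this relation characterizes $O$ as a focus of $b$ (with $g$ its directrix and $e$ its eccentricity), that is all that is required, and it treats the ellipse, parabola, and hyperbola cases uniformly.

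To carry this out, I would place $O$ at the origin with $\alpha$ the plane $z = 0$ and $l$ the $z$-axis, so that the apex of $c$ is $V = (0, 0, h)$ and $P = (0, 0, p)$. With $k$ of radius $\rho$, the cone is $x^2 + y^2 = \rho^2(h - z)^2/h^2$, so a point $Q = (X, Y, Z)$ of $a$ lies at distance $\rho(h - Z)/h$ from $l$. Projecting $Q$ from $P$ into $\alpha$ and using similar triangles in the meridian plane through $Q$ (which contains $O$, $P$, and $Q'$) gives the foot $Q'$ with \[ |OQ'| = \frac{|p|}{|p - Z|} \cdot \frac{\rho(h - Z)}{h}. \] The entire dependence on the shape and position of $\beta$ is now concentrated in the single quantity $Z$.

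The key step is to eliminate $Z$. Writing the (non-vertical) plane $\beta$ as $Z = \lambda X + \mu Y + \nu$ and substituting $X = (p - Z)x/p$, $Y = (p - Z)y/p$, where $(x, y) = Q'$, one solves a single linear equation for $Z$ and finds that $(h - Z)/(p - Z)$ is an \emph{affine} function of $(x, y)$. Feeding this back into the displayed formula, $|OQ'|$ becomes a constant multiple of $|p(h - \nu) + (h - p)(\lambda x + \mu y)|$; dividing by the norm $|h - p|\sqrt{\lambda^2 + \mu^2}$ turns this into the distance from $Q'$ to the line $g : p(h - \nu) + (h - p)(\lambda x + \mu y) = 0$, yielding exactly $|OQ'| = e\, d(Q', g)$ with $e = \rho|h - p|\sqrt{\lambda^2 + \mu^2}/(h|p - \nu|)$. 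Geometrically, $g$ is the projection from $P$ of the line $\beta \cap \{z = h\}$, the horizontal plane through the apex. Equivalently, substituting $(x, y) = (r\cos\theta, r\sin\theta)$ and solving for $r = |OQ'|$ produces the polar equation $r = c/(1 - e\cos(\theta - \theta_0))$ of a conic with focus at the origin, which is perhaps the cleanest way to state the conclusion.

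The computation itself is routine, so the main obstacle is organizational rather than deep. I must check that the \emph{same} constant $e$ and the \emph{same} line $g$ serve every point of $b$ simultaneously — this is precisely what pins the focus at $O$ rather than at some other point — and I must account for the degenerate configurations that the formula flags: $p = h$ (projection from the apex, collapsing each generator to a point), $\nu = p$ (the plane $\beta$ passing through $P$, forcing $e \to \infty$), and the vertical plane $\beta$ (handled by a limiting argument, or by choosing the coordinate frame so that $\beta$ is non-vertical). Away from these cases the argument is complete.
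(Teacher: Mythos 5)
Your proof is correct, and it reaches the same destination as the paper's --- the focus--directrix property $|OQ'| = e\, d(Q', g)$ with a directrix parallel to $\alpha \cap \beta$ --- but by a genuinely different route. The paper works synthetically: it introduces the second intersection $Z = AY \cap k$ of the generator with the base circle, chases signed ratios along the line $O$, $X$, $Z$, $U$, and invokes Menelaus' theorem for the transversal through $P$ to show that $d(X, \alpha \cap \beta)$ is an affine function of $OX$; to keep the signs manageable it explicitly restricts to a convenient configuration (``$a$ and $k$ do not intersect and $P$ lies between the intersection points of $\alpha$ and $\beta$ with $l$''). Your coordinate computation replaces all of that with the single observation that $(h - Z)/(p - Z)$ becomes affine in $(x, y)$ once $Z$ is eliminated using the equation of $\beta$, which is the exact analytic counterpart of the paper's Menelaus step. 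What your version buys is uniformity: the signed algebra automatically covers both nappes of the cone and all relative positions of $P$, $\alpha$, and $\beta$, and it produces the eccentricity and the directrix (the projection from $P$ of $\beta \cap \{z = h\}$) in closed form; what it gives up is the coordinate-free character of the original. Your list of degenerate cases is the right one ($p = h$ and $\lambda = \mu = 0$ in fact still yield circles centred at $O$, so only $\nu = p$, where $b$ collapses into the line $\alpha \cap \beta$, genuinely fails), and the vertical-$\beta$ case is fairly dispatched by continuity, so I see no gap.
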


\begin{proof}[Proof of the Lemma]

If $\alpha \parallel \beta$, then $b$ is a circle of center $O$ and we are done. Let $\alpha \not\,\parallel \beta$ and $s = \alpha \cap \beta$.

Let $A$ be the vertex of $c$, $X$ be any point on $b$, $Y = PX \cap a$, $Z = AY \cap k$, and $U$ be the orthogonal projection of $Y$ onto $\alpha$. Notice that $A$, $O$, $P$, $X$, $Y$, $Z$, and $U$ all lie in some plane perpendicular to $\alpha$ and that $O$, $X$, $Z$, and $U$ are collinear.

It suffices to show that $d(X, s)$ is a linear function of the length of the segment $OX$ when $X$ traces $b$. For, suppose that we manage to prove that $d(X, s) = m \cdot OX + n$ where $m$ and $n$ are some real constants. Without loss of generality, $m \ge 0$ (otherwise, we can change the direction of $s$). Let $s'$ be the line which is parallel to $s$ and at a signed distance of $n$ from $s$. Then $b$, the locus of $X$, is the conic section of focus $O$, directrix $s'$, and eccentricity $m$.

For simplicity, we assume that $a$ and $k$ do not intersect and that $P$ lies between the intersection points of $\alpha$ and $\beta$ with $l$. In this case, $P$ lies between $A$ and $O$ and between $X$ and $Y$, $Y$ lies between $A$ and $Z$, $U$ lies between $O$ and $Z$, and $O$ lies between $X$ and $Z$.

We have \[ d(X, s) = \frac{ZX}{ZO} \cdot d(O, s) - \frac{OX}{ZO} \cdot d(Z, s) = \frac{R + OX}{R} \cdot d(O, s) - \frac{OX}{R} \cdot d(Z, s), \] where $R$ is the radius of $k$.

We can forget about the coefficient $\frac{1}{R}$. Since $d(O, s)$ is constant, we can also forget about the term $(R + OX) \cdot d(O, s)$. This leaves us to prove that \[ OX \cdot d(Z, s) \] is linear in $OX$.

We have \[ d(Z, s) = \frac{OZ}{OU} \cdot d(U, s) - \frac{ZU}{UO} \cdot d(O, s) = \frac{R}{OU} \cdot d(U, s) - \frac{ZU}{UO} \cdot d(O, s). \]

Since the shape of $\t YZU \sim \t AZO$ remains fixed when $X$ varies, the ratio $YU : ZU$ remains fixed as well. Since $Y$ varies in the fixed plane $\beta$, the ratio $d(U, s) : YU$ remains fixed also. It follows, then, that $d(U, s) : ZU$ remains fixed and that \[ d(Z, s) = c_1 \cdot \frac{ZU}{UO} \] for some real constant $c_1$.

Notice that $ZU : UO = ZY : YA$. By Menelaus' theorem, then, we have \[ \frac{ZY}{YA} \cdot \frac{AP}{PO} \cdot \frac{OX}{XZ} = 1 \Leftrightarrow \frac{ZY}{YA} = c_2 \cdot \frac{XZ}{OX}, \] where $c_2$ is some real constant.

From this, we obtain \[ OX \cdot d(Z, s) = OX \cdot c_1 \cdot c_2 \cdot \frac{XZ}{OX} = c_1 \cdot c_2 \cdot (R + OX), \] which, of course, is linear in $OX$, as needed. \qedhere

\end{proof}

\begin{proof}[Proof of Theorem \ref{t5}]

In the notation of the proof of Theorem \ref{t4}, apply the lemma to the cone $d$, the planes $\alpha$ and $\pi_E$, and the projection point $C$. \qedhere

\end{proof}

\begin{figure}[ht!]\begin{center}
\includegraphics[width=0.75\textwidth]{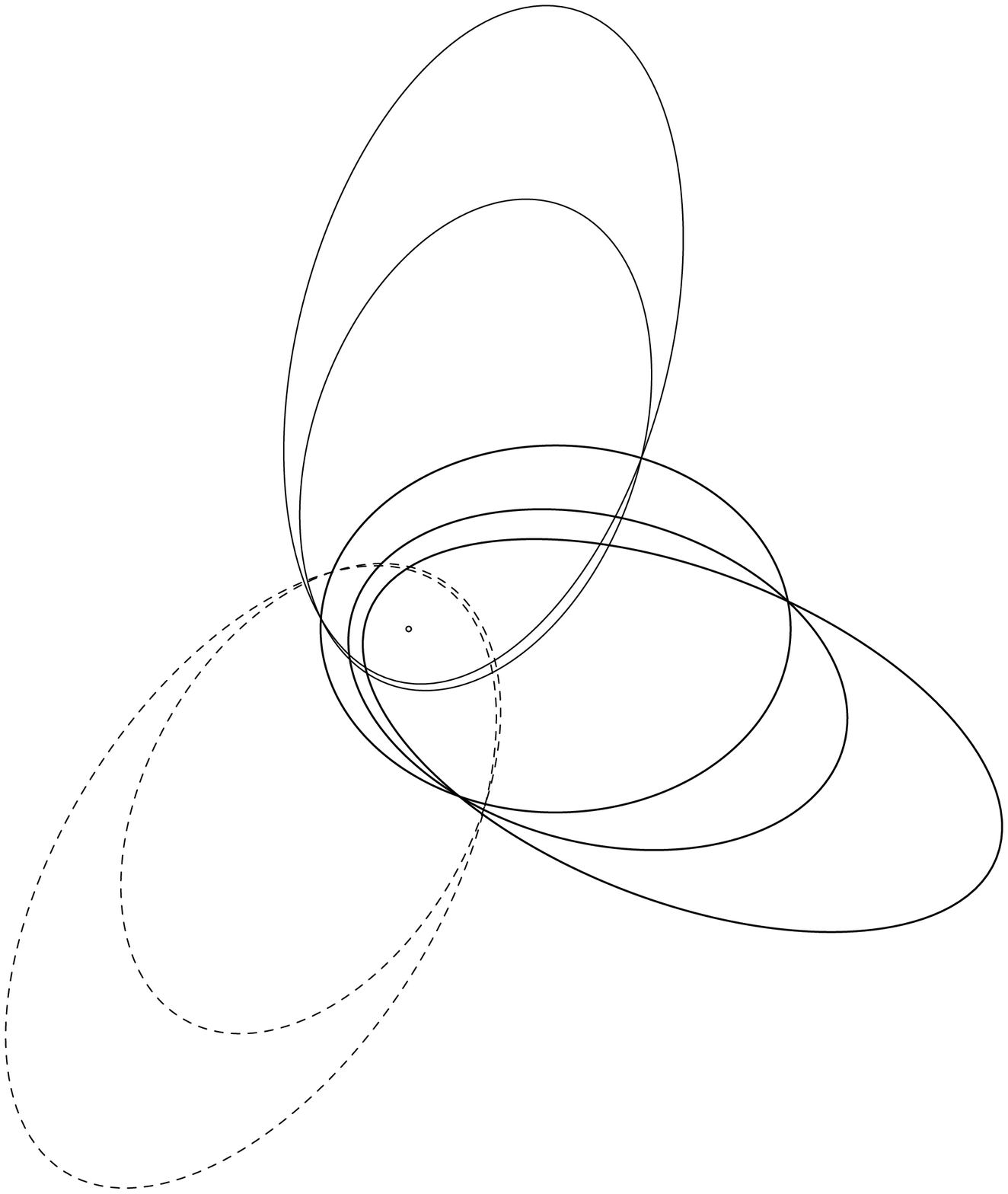}
\caption{}
\label{f3}
\end{center}\end{figure}

Theorem \ref{t5} has the following pleasing corollary.

\begin{theorem}
\label{t6}

Three ellipses $\e_1$, $\e_2$, and $\e_3$ of a common focus $F$ intersect in two distinct points. The ellipses $\e_1$ and $\e_2$ start to revolve continuously about $F$ so that they still intersect in two distinct points. Then the ellipse $\e_3$ can also revolve continuously about $F$ so that, at all times, all three ellipses continue to intersect in two distinct points. (Fig.\ \ref{f3})

\end{theorem}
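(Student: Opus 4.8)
The plan is to imitate the proof of Theorem \ref{t3}, turning the three-ellipse incidence into a Poncelet porism for confocal conics. The first move is to replace ``all three ellipses pass through two common points'' by a statement about a single moving line. If $X$ and $Y$ denote the two common points, then $XY$ is at once the common chord of $\e_1$ and $\e_2$ and the common chord of $\e_1$ and $\e_3$; conversely, $\e_3$ passes through $X$ and $Y$ exactly when the common chord $XY$ of $\e_1$ and $\e_2$ is also a common chord of $\e_1$ and $\e_3$. So I would track the single line $\ell = XY$ as $\e_1$ and $\e_2$ revolve, and try to realise $\ell$, at every instant, as a common chord of $\e_1$ and a suitably revolved copy of $\e_3$.

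The second move supplies the conics to which Poncelet will be applied. Passing to the frame rotating with $\e_1$, the ellipse $\e_2$ revolves about the common focus $F$, and by the generalized Theorem \ref{t4} the common chord of $\e_1$ and $\e_2$ envelops a fixed conic $\gamma$; by Theorem \ref{t5}, $F$ is a focus of $\gamma$. Running the same construction with $\e_3$ in place of $\e_2$ produces a second conic $\gamma'$, again with focus $F$. The initial hypothesis says precisely that the starting line $\ell_0 = X_0 Y_0$ is a common tangent of $\gamma$ and $\gamma'$, realised simultaneously by the zero revolution of $\e_2$ and of $\e_3$.

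The third move is the closure argument. For the pair $(\e_1, \gamma)$ the line $\ell$ is a chord of $\e_1$ tangent to $\gamma$, and as $\e_2$ revolves $\ell$ executes a Poncelet traverse of this confocal pair; the revolution angle of $\e_2$ is a faithful coordinate on that traverse, and likewise the revolution angle of $\e_3$ is a faithful coordinate on the Poncelet traverse of $(\e_1, \gamma')$. Because both $\gamma$ and $\gamma'$ are confocal with $F$ (this is the whole point of Theorem \ref{t5}), the contact point of $\ell$ with each conic is measured by one and the same focal angle at $F$. I would use this common focal coordinate to define the revolution of $\e_3$ as the unique one whose $(\e_1, \gamma')$-traverse keeps pace with the prescribed $(\e_1, \gamma)$-traverse of $\e_2$. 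Poncelet's theorem, applied exactly as in Theorem \ref{t3}, then makes this correspondence a well-defined homeomorphism from the circle of revolutions of $\e_2$ onto the circle of revolutions of $\e_3$; since it sends the initial position of $\e_2$ to the initial position of $\e_3$, a continuity/monodromy argument extends it over the entire revolution, and $\e_3$ follows continuously with all three ellipses meeting in two points throughout.

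The hard part will be the middle of the third step: proving that the two focal parametrisations genuinely agree, so that a single continuous, single-valued revolution of $\e_3$ reproduces the moving chord $\ell$. Equivalently, one must show that as $\ell$ sweeps the tangents of $\gamma$ it sweeps tangents of $\gamma'$ compatibly, i.e. that the two endpoints $X, Y \in \e_1$ cut out by $\ell$ can always be met by one revolved copy of $\e_3$. This is exactly where the focus property of Theorem \ref{t5} is indispensable: it furnishes the common angular coordinate at $F$ in which the revolutions of $\e_2$ and $\e_3$ may be compared, and without it the lift of the motion to $\e_3$ could become multivalued or fail to close up.
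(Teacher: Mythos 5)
There is a genuine gap, and it sits exactly where you suspected: the ``synchronization'' in your third step cannot be carried out. In the frame of $\e_1$, the common chord of $\e_1$ and the revolving $\e_2$ sweeps out the full one-parameter family of tangents of the envelope $\gamma$, whereas the lines realizable as common chords of $\e_1$ and a revolved copy of $\e_3$ are precisely the tangents of the other envelope $\gamma'$. For your scheme to work, every tangent of $\gamma$ occurring during the motion would have to be a tangent of $\gamma'$ as well; but two distinct conics have at most four common tangents, so the required revolved copy of $\e_3$ exists only at isolated instants. The shared focus $F$ does not rescue this: the tangent to $\gamma$ at the point with a given focal angle and the tangent to $\gamma'$ at the point with the same focal angle are different lines in general, so the ``common focal coordinate'' produces no matching. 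Concretely, writing $\e_i$ in polar coordinates about $F$ as $1/r = (1 + e_i\cos(\phi - \phi_i))/l_i$, the common chord of $\e_i$ and $\e_j$ is the line $ux + vy = 1$ with $(u,v) = (e_i w_i - e_j w_j)/(l_i - l_j)$, where $w_i = (\cos\phi_i, \sin\phi_i)$; forcing the chords of $(\e_1,\e_2)$ and $(\e_1,\e_3)$ to coincide requires $|(l_3 - l_2)e_1 w_1 + (l_1 - l_3)e_2 w_2| = e_3\,|l_1 - l_2|$, a constraint on the angle between the axes of $\e_1$ and $\e_2$ that is destroyed as soon as that angle changes. Also, Poncelet's closure theorem concerns closing an inscribed--circumscribed polygon; there is no such chain here, so invoking it ``exactly as in Theorem \ref{t3}'' does no work.

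This computation in fact shows that under the reading you adopted --- two points common to all three ellipses --- the statement itself fails for generic independent revolutions of $\e_1$ and $\e_2$. The theorem must therefore be read as asserting that every two of the three ellipses meet in two distinct \emph{real} points and that this pairwise property can be maintained for $\e_3$. Under that reading there is no single line $XY$ to track, and the chord-plus-Poncelet framework is beside the point; what is needed instead is a criterion, extracted from Theorems \ref{t4} and \ref{t5}, for when two conics with common focus $F$ genuinely meet in two real points as one revolves (the chord line is always a tangent of the focus-$F$ envelope, and reality of the intersection is the condition that this tangent be a secant of the fixed conic), followed by a continuous-selection argument on the resulting arcs of admissible rotations. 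Note that the paper offers no proof of Theorem \ref{t6}, presenting it only as a corollary of Theorem \ref{t5}, so there is no written argument to compare against; but your proposal as it stands does not establish the theorem under either reading.
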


\section{Open Questions}

Let $w = a_1a_2 \ldots a_n$ be any $n$-letter word over the two-letter alphabet $\{c, s\}$. For any annulus $a$ delimited by an inner circle $\o$ and an outer circle $k$, we construct a chain $u_1$, $u_2$, ... $u_{n + 1}$ of circles and segments as follows:

(a) $u_i$ is a circle inscribed in $a$ if $a_i = c$ and a segment tangent to $\o$ whose endpoints lie on $k$ if $a_i = s$ (for $i = n + 1$, we set $a_{n + 1} = a_1$);

(b) If at least one of $u_i$ and $u_{i + 1}$ is a circle, then they are tangent;

(c) If $u_i$ and $u_{i + 1}$ are both segments, then they have a common endpoint;

(d) For $1 < i \le n$, the contact points of $u_i$ with $u_{i - 1}$ and $u_{i + 1}$ are separated on $u_i$ by the contact points of $u_i$ with $\o$ and $k$ if $u_i$ is a circle, and by the contact point of $u_i$ with $\o$ if $u_i$ is a segment.

If, for any annulus $a$, it is true that if $u_1 \equiv u_{n + 1}$ for some initial $u_1$ then $u_1 \equiv u_{n + 1}$ for all initial $u_1$, and there exists at least one annulus $a$ for which the premise of this implication is not void, then we say that the word $w$ is a \emph{closure sequence}.

Poncelet's theorem tells us that $s^n$ is a closure sequence for all $n \ge 3$. Steiner's theorem tells us that $c^n$ is a closure sequence for all $n \ge 3$. Finally, the Mixed Poncelet-Steiner theorem tells us that $(cs)^n$ and $(sc)^n$ are closure sequences for all $n \ge 2$.

Are there any other closure sequences? Given a particular closure sequence $w$, what relation must the radii of $\o$ and $k$ and their intercenter distance satisfy in order for the chain described by $w$ to close? Is this relation always of the form $f(R, r, d) = 0$ for some rational function $f$? Finally, is it true that, for any closure sequence $w$ and any positive integer $n$, $w^n$ is also a closure sequence?


\begin{thebibliography}{9}

\bibitem{fp89} Fukagawa H., Pedoe D., \emph{Japanese Temple Geometry Problems: San Gaku}, The Charles Babbage Research Center, 1989.

\bibitem{euf} {\tt http://mathworld.wolfram.com/EulerTriangleFormula.html}, or

{\tt http://en.wikipedia.org/wiki/Euler's\_theorem\_in\_geometry}

\bibitem{pon} {\tt http://mathworld.wolfram.com/PonceletsPorism.html}, or

{\tt http://en.wikipedia.org/wiki/Poncelet's\_closure\_theorem}

\bibitem{stein} {\tt http://mathworld.wolfram.com/SteinersPorism.html}, or

{\tt http://en.wikipedia.org/wiki/Steiner\_chain}

\end{thebibliography}
\end{document}